\newtheorem{lemma}{Lemma}[section]
\newtheorem{theorem}[lemma]{Theorem}
\newtheorem{proposition}[lemma]{Proposition}
\theoremstyle{definition}
\newtheorem{remark}[lemma]{Remark}
\newtheorem{example}[lemma]{Example}
\theoremstyle{remark}
\newcommand{\define}[1]{{\bfseries\itshape #1}}
\def\subsection{\@startsection{subsection}{2}%
  \z@{.5\linespacing\@plus.7\linespacing}{-.5em}%
  {\normalfont\scshape}}
\def\subsubsection{\@startsection{subsection}{2}%
  \z@{.5\linespacing\@plus.7\linespacing}{-.5em}%
  {\normalfont\bfseries}}
\newcommand{\relphantom}[1]{\mathrel{\phantom{#1}}}
\renewcommand{\AA}{\ensuremath{\mathbb{A}}} 
\newcommand{\CC}{\ensuremath{\mathbb{C}}} 
\newcommand{\NN}{\ensuremath{\mathbb{N}}} 
\newcommand{\PP}{\ensuremath{\mathbb{P}}} 
\newcommand{\RR}{\ensuremath{\mathbb{R}}} 
\newcommand{\ZZ}{\ensuremath{\mathbb{Z}}} 
\newcommand{\eulerian}[2]{\genfrac{\langle}{\rangle}{0pt}{}{#1}{#2}}
\renewcommand{\geq}{\geqslant}
\renewcommand{\leq}{\leqslant}
\newcommand{\iso}{\cong}
\DeclareMathOperator{\conv}{conv}
\DeclareMathOperator{\interior}{int}
\DeclareMathOperator{\vol}{vol}
\begin{document}

\title{Log-concavity of asymptotic multigraded Hilbert series}

\author[A. McCabe]{Adam McCabe}
\address{Adam McCabe\\ 
  Department of Mathematics\\ 
  University of Toronto \\
  40 St.~George Street\\
  Toronto\\ 
  ON\\
  M5S~2E4\\ 
  Canada}
\email{\href{mailto:adam.r.mccabe@gmail.com}{adam.r.mccabe@gmail.com}}

\author[G.G.~Smith]{Gregory G. Smith}
\address{Gregory G. Smith\\ 
  Department of Mathematics \& Statistics\\ 
  Queen's University\\ 
  Kingston\\ 
  ON \\ 
  K7L~3N6\\ 
  Canada}
\email{\href{mailto:ggsmith@mast.queensu.ca}{ggsmith@mast.queensu.ca}}

\subjclass[2010]{05E40, 13D40, 52B20}

%\date{19 September 2011}

\begin{abstract}
  We study the linear map sending the numerator of the rational function
  representing the Hilbert series of a module to that of its $r$-th Veronese
  submodule.  We show that the asymptotic behaviour as $r$ tends to infinity
  depends on the multidegree of the module and the underlying positively
  multigraded polynomial ring.  More importantly, we give a polyhedral
  description for the asymptotic polynomial and prove that the coefficients are
  log-concave.
\end{abstract}

\maketitle

\vspace*{-1em}

\section{Introduction} 
\label{sec:intro}

\subpdfbookmark{Statement}{sub:statement} 

\noindent
Although motivated by multigraded Hilbert series, our main result only involves
linear operators on a multivariate power series.  To be explicit, let $A := [
\bm{a}_1 \, \dotsb \,\, \bm{a}_n ]$ be an integer $(d \times n)$-matrix of rank
$d$ such that the only nonnegative vector in the kernel is the zero vector
({\footnotesize i.e.{} $\bm{a}_1, \dotsc, \bm{a}_n$ is an acyclic vector
  configuration; see \cite{Ziegler}*{\S6.2}}).  Equivalently, the rational
function $1 / \bigl( \prod\nolimits_{1 \leq j \leq n} (1-\bm{t}^{\bm{a}_j})
\bigr)$ has a unique expansion as a multivariate formal power series; cf.{}
Lemma~8.16 in \cite{MillerSturmfels}.  For each positive integer $r$, consider
the linear operator $\Phi_r$ induced by sifting out all terms with the exponent
vector divisible by $r$ in the power series expansion of a rational function.
More precisely, $\Phi_r$ acts on $F(\bm{t}) \in \ZZ[\bm{t}^{\pm 1}] :=
\ZZ[t_1^{\relphantom{1}}, t_1^{-1}, \dotsc, t_d^{\relphantom{1}}, t_d^{-1}]$ as
follows:
\[
\text{if} \,\, \sum_{(w_1,\dotsc,w_d) \in \ZZ^d} c_{\bm{w}} \, t_1^{w_1} \dotsb
t_d^{w_d} = \sum_{\bm{w}} c_{\bm{w}} \, \bm{t}^{\bm{w}} =
\frac{F(\bm{t})}{\prod\limits_{1 \leq j \leq n} (1-\bm{t}^{\bm{a}_j})} \quad
\text{then} \quad \sum_{\bm{w}} c_{r \bm{w}} \, \bm{t}^{\bm{w}} = \frac{\Phi_r [
  F(\bm{t}) ]}{\prod\limits_{1 \leq j \leq n} (1-\bm{t}^{\bm{a}_j})} \, .
\]
The goal of this article is to understand $\Phi_r [ F(\bm{t}) ]$ for $r \gg 0$.

To state our result, $\alpha \colon \RR^n \to \RR^d$ denotes the linear map
determined by $A$.  The zonotope $Z$ is the image under $\alpha$ of the unit
hypercube $[0,1]^n \subset \RR^n$.  For each $\bm{u} \in \ZZ^d$, set $P(\bm{u})
:= \alpha^{-1}(\bm{u}) \cap [0,1]^n$.  We say that the map $\alpha$ is
\define{degenerate} if there exists $\bm{u}$ in the boundary of $Z$ such that
$\dim P(\bm{u}) = n-d$.  By identifying the rational polytope $P(\bm{u})$ with a
translate $P(\bm{u}) +\bm{x}$ lying in $\ker(A) = \alpha^{-1}(\bm{0})$, the
\define{normalized volume} $\vol_{n-d}\bigl( P(\bm{u}) \bigr)$ equals $(n-d)!$
times the Euclidean volume of $P(\bm{u}) + \bm{x}$ with respect to the lattice
$\alpha^{-1}(\bm{0}) \cap \ZZ^n$.  Let $m$ be the greatest common divisor of the
maximal minors of $A$; in other words, the sublattice $\ZZ A \subseteq \ZZ^d$
generated by the columns of $A$ has index $m$.

\begin{theorem}
  \label{thm:main}
  
  If $\alpha$ is non-degenerate and $F(\bm{t}) = \sum_{\bm{v} \in \ZZ A}
  f_{\bm{v}} \, \bm{t}^{\bm{v}} \in \ZZ[\bm{t}^{\pm 1}]$, then we have
  \begin{xalignat*}{2}
    \limsup_{r \to \infty} \, \frac{\Phi_{r} [ F(\bm{t}) ]}{r^{n-d}} &= \biggl(
    \frac{F(\bm{1})}{(n-d)!} \biggr) \, K_A(\bm{t}) \,\, &\text{where} \quad
    K_A(\bm{t}) &:= \!\! \sum_{\bm{u} \in \interior(Z) \cap \ZZ^d} \!\!
    \vol_{n-d}\bigl( P(\bm{u}) \bigr) \, \bm{t}^{\bm{u}} \, .
  \end{xalignat*}
  Moreover, the coefficients of $K_A(\bm{t})$ are log-concave, quasi-concave,
  and sum to $m^{n-d} \, (n-d)!$.
\end{theorem}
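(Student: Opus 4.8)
The plan is to establish the displayed asymptotic formula first and then to read the three properties of $K_A(\bm t)$ off the polyhedral description it provides; of these, only log-concavity requires a genuinely new idea, and that idea is the Brunn--Minkowski inequality. To set up the formula, write $\frac{F(\bm t)}{\prod_{j}(1-\bm t^{\bm a_j})}=\sum_{\bm w}c_{\bm w}\bm t^{\bm w}$. Acyclicity of $A$ forces $\RR_{\geq 0}^{n}\cap\ker(A)=\{\bm 0\}$, so each fibre polytope $Q(\bm u):=\alpha^{-1}(\bm u)\cap\RR_{\geq 0}^{n}$ is bounded, and $c_{\bm w}=\sum_{\bm v}f_{\bm v}\,\#\bigl(\ZZ^{n}\cap Q(\bm w-\bm v)\bigr)$ is a weighted vector partition function. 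Expanding $\prod_{j}(1-\bm t^{\bm a_j})=\sum_{S\subseteq\{1,\dots,n\}}(-1)^{|S|}\bm t^{\bm a_S}$ with $\bm a_S:=\sum_{j\in S}\bm a_j$, the coefficient of $\bm t^{\bm w}$ in $\Phi_{r}[F(\bm t)]$ is $\sum_{S}(-1)^{|S|}c_{r(\bm w-\bm a_S)}$; this vanishes whenever $r\bm w\notin\ZZ A$, while along the subsequence with $r\bm w\in\ZZ A$ the exact homogeneity $Q(r\bm u)=r\,Q(\bm u)$ and Ehrhart's theorem give $c_{r(\bm w-\bm a_S)}\sim \tfrac{F(\bm 1)}{(n-d)!}\vol_{n-d}\bigl(Q(\bm w-\bm a_S)\bigr)\,r^{n-d}$. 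Intersecting the inclusion--exclusion identity $\mathbf 1_{[0,1]^{n}}=\sum_{S}(-1)^{|S|}\mathbf 1_{\bm e_S+\RR_{\geq 0}^{n}}$ with a fibre of $\alpha$ and translating the orthant slice over $\bm w-\bm a_S$ by the integer vector $\bm e_S$ to a slice of the fibre over $\bm w$ (an $(n-d)$-volume-preserving identification) yields $\sum_{S}(-1)^{|S|}\vol_{n-d}\bigl(Q(\bm w-\bm a_S)\bigr)=\vol_{n-d}\bigl(P(\bm w)\bigr)$. So the $\limsup$ over $r$ of the $\bm t^{\bm w}$-coefficient of $r^{-(n-d)}\Phi_{r}[F(\bm t)]$ is $\tfrac{F(\bm 1)}{(n-d)!}\vol_{n-d}(P(\bm w))$, which by non-degeneracy vanishes unless $\bm w\in\interior(Z)$; after one checks that the supports of $\Phi_{r}[F(\bm t)]$ are uniformly bounded in $r$, summing over $\bm w$ gives the displayed formula and shows that the coefficients of $K_A(\bm t)$ are the numbers $\vol_{n-d}(P(\bm u))$.

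For log-concavity, note that up to the fixed positive factor $(n-d)!/\operatorname{covol}(\ker(A)\cap\ZZ^{n})$ the coefficient $\vol_{n-d}(P(\bm u))$ equals the $(n-d)$-dimensional Lebesgue volume of the slice of the convex body $[0,1]^{n}$ cut out by the affine subspace $\alpha^{-1}(\bm u)$. For $\bm u_{0},\bm u_{1}\in\interior(Z)$ and $\lambda\in(0,1)$, convexity of $[0,1]^{n}$ gives
\[
(1-\lambda)\bigl([0,1]^{n}\cap\alpha^{-1}(\bm u_{0})\bigr)+\lambda\bigl([0,1]^{n}\cap\alpha^{-1}(\bm u_{1})\bigr)\ \subseteq\ [0,1]^{n}\cap\alpha^{-1}\bigl((1-\lambda)\bm u_{0}+\lambda\bm u_{1}\bigr),
\]
so, translating all three bodies into $\ker(A)$ and applying the Brunn--Minkowski inequality in that $(n-d)$-dimensional space, the function $\bm u\mapsto\vol_{n-d}(P(\bm u))^{1/(n-d)}$ is concave on $\interior(Z)$, hence so is $\bm u\mapsto\log\vol_{n-d}(P(\bm u))$. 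Specialising this concavity to a lattice point $\bm u$ and its neighbours $\bm u\pm\bm e_{i}$ --- the inequality being vacuous when a neighbour leaves $\interior(Z)$ --- yields $\vol_{n-d}(P(\bm u))^{2}\geq\vol_{n-d}(P(\bm u+\bm e_{i}))\,\vol_{n-d}(P(\bm u-\bm e_{i}))$ for every $\bm u\in\ZZ^{d}$ and every $i$, the asserted log-concavity; quasi-concavity is then automatic, because a log-concave function is quasi-concave, so each super-level set of $\bm u\mapsto\vol_{n-d}(P(\bm u))$, extended by $0$ off $\interior(Z)$, is convex.

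For the sum of the coefficients, the identity $\vol_{n-d}(P(\bm u))=\sum_{S}(-1)^{|S|}\vol_{n-d}(Q(\bm u-\bm a_S))$ rearranges to $K_A(\bm t)=\prod_{j}(1-\bm t^{\bm a_j})\sum_{\bm u}\vol_{n-d}(Q(\bm u))\bm t^{\bm u}$. Setting $\bm t=e^{\bm s}$ and letting $\bm s\to\bm 0$, the prefactor behaves like $(-1)^{n}\prod_{j}(\bm s\cdot\bm a_j)$, while the exact homogeneity of $\bm u\mapsto\vol_{n-d}(Q(\bm u))$ permits replacing the sum by $\int_{\RR^{d}}\vol_{n-d}(Q(\bm u))e^{\bm s\cdot\bm u}\,d\bm u$ to leading order; the coarea formula along $\alpha$ trades this integral for $\int_{\RR_{\geq 0}^{n}}e^{\bm s\cdot\alpha(\bm x)}\,d\bm x=(-1)^{n}/\prod_{j}(\bm s\cdot\bm a_j)$, and inserting the lattice identity $\sqrt{\det(AA^{\top})}=m\cdot\operatorname{covol}(\ker(A)\cap\ZZ^{n})$, which comes from the short exact sequence $0\to\ker(A)\cap\ZZ^{n}\to\ZZ^{n}\xrightarrow{A}\ZZ A\to 0$, evaluates $K_A(\bm 1)$ to $m^{n-d}(n-d)!$.

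The hard part will be the first step: identifying which subsequences of $r$ actually realise the $\limsup$, carrying the correct Ehrhart normalization through for fibres over points of $\ZZ^{d}\setminus\ZZ A$, and verifying the uniform boundedness of the supports of $\Phi_{r}[F(\bm t)]$. Once the polyhedral formula for $K_A(\bm t)$ is available, log-concavity follows essentially immediately from Brunn--Minkowski, and quasi-concavity and the value at $\bm 1$ require only bookkeeping.
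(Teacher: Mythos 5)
Your treatment of the limit and of log-concavity is essentially correct, though Step 1 takes a different route from the paper: you expand the coefficient of $\bm{t}^{\bm{w}}$ in $\Phi_r[F(\bm{t})]$ as the alternating sum $\sum_S(-1)^{|S|}c_{r(\bm{w}-\bm{a}_S)}$ of weighted vector partition functions attached to the orthant fibres $Q(\bm{u})$, apply quasi-polynomial (Ehrhart-type) asymptotics to each of the $2^n$ terms, and reassemble $\vol_{n-d}\bigl(P(\bm{w})\bigr)$ by inclusion--exclusion, whereas the paper writes $\Phi_r[F(\bm{t})]=\Psi_r\bigl[F(\bm{t})\prod_j(1+\bm{t}^{\bm{a}_j}+\dotsb+\bm{t}^{(r-1)\bm{a}_j})\bigr]$, so that each coefficient is a single lattice-point count $C_r(\bm{u},\bm{v})$ in a dilated cube fibre, to which the Ehrhart results apply directly. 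That identity also settles the item you flag as ``the hard part'': it exhibits $\Phi_r[F(\bm{t})]$ as a Laurent polynomial with support in $\tfrac1r\operatorname{supp}(F)+Z$, uniformly bounded in $r$; in your setup this well-definedness does not come for free and should be argued. Your Brunn--Minkowski argument for log-concavity is the same idea as the paper's Proposition~\ref{p:logconcave} (which applies Pr\'ekopa--Leindler to the indicator functions of the slices, i.e.\ proves Brunn--Minkowski for sections), and quasi-concavity follows as you say.

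The genuine gap is in your evaluation of $K_A(\bm{1})$: the computation you outline does not produce the constant you assert. Carrying it through, the coarea formula gives $\int_{\RR^d}\mathcal{H}^{n-d}\bigl(Q(\bm{u})\bigr)e^{\bm{s}\cdot\bm{u}}\,d\bm{u}=\sqrt{\det(AA^{\top})}\,(-1)^n/\prod_j(\bm{s}\cdot\bm{a}_j)$, the lattice sum over $\ZZ^d$ (covolume $1$) matches the integral to leading order with no further factors of $m$, and inserting your (correct) identity $\sqrt{\det(AA^{\top})}=m\cdot\operatorname{covol}(\ker(A)\cap\ZZ^n)$ yields $K_A(\bm{1})=m\,(n-d)!$, not $m^{n-d}(n-d)!$. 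This is not a defect of your method but a discrepancy with the stated constant: for $A=[\,2\;2\;2\,]$ one has $n-d=2$, $m=2$, and a direct computation gives $\lim_{r\to\infty}\Phi_{2r}[1]/(2r)^2=\tfrac18\,t(1+t)^4$, so the first display forces $K_A(t)=\tfrac14\,t(1+t)^4$ and $K_A(1)=4=m\,(n-d)!$, while $m^{n-d}(n-d)!=8$; the same value $m\,(n-d)!$ also follows exactly from $\sum_{\bm{u}}C_{mr}(\bm{u},\bm{0})=\#\{\bm{x}\in[0,mr-1]^n\cap\ZZ^n: A\bm{x}\equiv\bm{0}\pmod{mr}\}=(mr)^{n-d}m$, computed via Smith normal form. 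The two constants coincide precisely when $m=1$ or $n-d=1$, which covers every example in the paper; the paper's Step~3 dissection of $[0,m]^{n-d}$ into the regions $R(\bm{u})$ does not match the fibres $P(\bm{u})$ when $m>1$ (its affine map only lands in $[0,1]^n$ for $m=1$, and for $A=[\,2\;2\;2\,]$ it produces four regions against five interior lattice points of $Z$). So rather than asserting the stated value, you should conclude $K_A(\bm{1})=m\,(n-d)!$ from your own argument and note that the exponent on $m$ in the statement needs correction.
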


When $A = [ 1 \, \dotsb \,\, 1]$, Theorem~\ref{thm:main} specializes to
Theorem~5.1 in \cite{DF2}.  In this case, $K_A(\bm{t})$ is the Eulerian
polynomial $\sum_{i \geq 0} \eulerian{n-1}{i} \, t^{i+1}$ where
$\eulerian{n-1}{i}$ counts the permutations of $\{1, \dotsc, n-1\}$ with exactly
$i$ ascents.  If $A$ is a totally unimodular matrix ({\footnotesize i.e.{} each
  subdeterminant of $A$ is $ \pm 1$ or $0$}), then we have $K_A(\bm{t}) \in
\ZZ[\bm{t}^{\pm 1}]$; see Remark~\ref{rm:unimodular}.  Thus, for any appropriate
matrix $A$, one might regard $K_A(\bm{t})$ as a generalization of the Eulerian
polynomial.  However, $K_A(\bm{t})$ is not obviously related to the multivariate
Eulerian polynomials in \cite{BHVW}*{\S4.3} or the mixed Eulerian numbers in
\cite{Postnikov}*{\S16}.  Nevertheless, the proof of Theorem~\ref{thm:main}
({\footnotesize see Step~\hyperlink{step1}{1} in \S\ref{sec:two}}) implies that,
for any $F(\bm{t}) = \sum_{\bm{v} \in \interior(mZ)} f_{\bm{v}} \bm{t}^{\bm{v}}
\in \ZZ[\bm{t}^{\pm 1}]$ satisfying $F(\bm{1}) > 0$, there exists $r_0 \in \NN$
such that, for all $r \geq r_0$, $\Phi_{m r} [F(\bm{t}) ]$ has nonnegative
coefficients that are both log-concave and quasi-concave.  Since quasi-concavity
is the multivariate version of unimodality ({\footnotesize see
  Step~\hyperlink{step2}{2} in \S\ref{sec:two}}), Theorem~\ref{thm:main}
generalizes those parts of Theorem~1.4 in \cite{BrentiWelker} and Theorem~1.2 in
\cite{BeckStapledon} that do not make explicit reference to the roots of
$F(\bm{t})$.  It is an open problem to effectively bound $r_0$.

Despite sharing most properties of the Eulerian polynomial, the Laurent
polynomial $K_A(\bm{t})$ need not satisfy the multivariate analogues of
real-zero univariate polynomials.  For example, if
\[
A = \left[ 
  \renewcommand{\arraystretch}{0.7}
  \begin{array}{rrrrr}
    1 & -1 & 1 & 0 & 0 \\
    0 & 1 & -1 & 1 & 0 \\
    0 & 0 & 1 & -1 & 1
  \end{array}
\right]
\] 
then we have $K_A(t_1,t_2,t_3) = t_1t_3+t_2$ which does not have the half-plane
property; see Theorem~3.2 in \cite{Branden}.  Notably, the polynomial
$K_A(t_1,t_2,t_3) = t_1 t_3 + t_2$ is neither real stable nor Hurwitz stable.

In contrast with \cite{BrentiWelker}, \cite{BeckStapledon} or \cite{DF2}, we
provide a new proof that the Eulerian numbers $\eulerian{n}{i}$ are log-concave
and unimodal.  We use polyhedral geometry and the Pr\'ekopa-Leindler inequality
({\footnotesize a.k.a.{} the functional form of the Brunn-Minkowski inequality}) to
establish the log-concavity for the coefficients of $K_A(\bm{t})$; see
Proposition~\ref{p:logconcave}.  In view of \cite{Stanley2} and \cite{Brenti},
our approach to log-concavity appears to be novel within algebraic
combinatorics.

\vspace{1em}

\currentpdfbookmark{Motivation}{sub:motivation} 

\noindent
Our primary motivation for Theorem~\ref{thm:main} comes from commutative algebra
and algebraic geometry.  The matrix $A$ defines a positive $\ZZ^d$-grading on
the polynomial ring $S := \CC[x_1,\dotsc,x_n]$ by setting $\deg(x_j) :=
\bm{a}_j$ for all $1 \leq j \leq n$; see Definition~8.7 in
\cite{MillerSturmfels}.  For any finitely generated $\ZZ^d$-graded $S$-module $M
= \bigoplus_{\bm{w} \in \ZZ^d} M_{\bm{w}}$, the Hilbert series of $M$ can be
expressed uniquely as a rational function of the form $F(\bm{t})/\bigl( \prod_{1
  \leq j \leq n} (1 - \bm{t}^{\bm{a}_j}) \bigr)$; see Theorem~8.20 in
\cite{MillerSturmfels}.  The numerator $F(\bm{t})$ is the \define{Poincar\'e
  polynomial} or \define{$K$\nobreakdash-polynomial} of $M$ --- it records the
alternating sum of the multigraded Betti numbers for $M$; see Definition~8.21
and Theorem~8.23 in \cite{MillerSturmfels}.  Applying the operator $\Phi_r$ to
the Hilbert series of $M$ yields the Hilbert series of the $r$-th Veronese
submodule $\bigoplus_{\bm{w} \in \ZZ^d} M_{r \bm{w}}$.  Theorem~\ref{thm:main}
shows that there exists a unique asymptotic $K$-polynomial for each $M$; see the
first subsection in \S\ref{sec:three} for the important case in which $F(\bm{1})
= 0$.  Therefore, Theorem~\ref{thm:main} gives some information about the
solution to Problem~5.3 in \cite{EinLazarsfeld}.  The results in both
\cite{EinLazarsfeld} and this paper suggest that the asymptotic structure is
surprisingly insensitive to the specific module.

Geometrically, the $\ZZ^d$-grading on $S$ corresponds to the action of the torus
$(\CC^*)^d$ on $\AA^n$ and a multigraded $S$-module $M$ corresponds to a
torus-equivariant sheaf on $\AA^n$.  In this setting, the numerator $F(\bm{t})$
is the class in the equivariant $K$-theory of $\AA^n$ represented by the
corresponding sheaf; see Theorem~8.34 in \cite{MillerSturmfels}.  Hence,
Theorem~\ref{thm:main} implies that the Veronese operator $\Phi_r$ distinguishes
the class $K_A(\bm{t})$ within equivariant $K$-theory.  By taking quotients, we
see that the toric variety $\AA^n /\!\!/ (\CC^*)^d$ is equipped with a
distinguished $K$-theory class $K_A(\bm{t})$.  A geometric explanation for this
distinguished $K$-theory class would be interesting.

Our secondary motivation comes from the theory of lattice point enumeration.
Given lattice polytopes $P_1, \dotsc, P_d$ in $\RR^n$ and nonnegative integers
$w_1, \dotsc, w_d$, the number of lattice points in the Minkowski sum of the
$w_i$-dilates of the $P_i$ is denoted $\bigl| (w_1 P_1 + \dotsb + w_d P_d ) \cap
\ZZ^d \bigr|$.  Ehrhart theory implies that the generating series $\sum_{(w_1,
  \dotsc, w_d)} \bigl| (w_1 P_1 + \dotsb + w_d P_d ) \cap \ZZ^d \bigr| \,
t_1^{w_1} \dotsb \, t_{d}^{w_d}$ can be expressed in the form $F(\bm{t})/\bigl(
\prod_{1 \leq j \leq n} (1 - \bm{t}^{\bm{a}_j}) \bigr)$.  The numerator
$F(\bm{t})$ is a multivariate \define{$h^*$-vector} ({\footnotesize a.k.a.{} Ehrhart
  $h$-vector or $\delta$-polynomial}) for the collection $P_1, \dotsc, P_d$.
Thus, Theorem~\ref{thm:main} also yields a multivariate analogue of
Corollary~1.3 in \cite{BeckStapledon}.

Multivariate formal power series of the form $F(\bm{t})/\bigl( \prod_{1 \leq j
  \leq n} (1 - \bm{t}^{\bm{a}_j}) \bigr)$ also arise naturally in many other
areas of mathematics.  Perhaps the most ubiquitous source is the vector
partition function $\psi_A \colon \NN^d \to \NN$ associated to $A$;
$\psi_A(\bm{u})$ counts the number of nonnegative integer vectors $\bm{x} \in
\NN^n$ such that $A \bm{x} = \bm{u}$.  The generating series $\sum_{\bm{u}}
\psi_A(\bm{u}) \, \bm{t}^{\bm{u}}$ equals $1/\bigl( \prod_{1 \leq j \leq n} (1 -
\bm{t}^{\bm{a}_j}) \bigr)$.  As \cite{Sturmfels} indicates, vector partition
functions appear in representation theory, approximation theory, and statistics.
Reinterpreting $\Phi_r$ in each of these areas will yield new insights into
$K_A(\bm{t})$.

\vspace{1em}

\currentpdfbookmark{Examples}{sub:examples}

\noindent
We end this section with two examples illustrating the necessity of the
hypotheses in Theorem~\ref{thm:main}.

\begin{example}
  \renewcommand{\qedsymbol}{$\diamond$}

  Let $A = \left[
    \begin{smallmatrix}
      2 & 1 & 0 \\
      0 & 1 & 2 
    \end{smallmatrix} 
  \right]$, so that $d = 2$, $n = 3$ and $m = \gcd(2,4,2) = 2$.  For all $r >
  2$, we have $\Phi_{2r}[1] = (r-1) t_1^2 t_2^2 + (r-1) t_1^2 t_2 + (r-1) t_1
  t_2^2 + r t_1 t_2 + t_1 + t_2 +1$ which implies that 
  \[
  \lim\limits_{r \to \infty} \frac{\Phi_{2r}[1]}{2r} = \tfrac{1}{2} t_1^2 t_2^2
  + \tfrac{1}{2} t_1^2 t_2 + \tfrac{1}{2} t_1 t_2^2 + \tfrac{1}{2} t_1 t_2 \, .
  \]  
  However, we also have $\Phi_{2r + 1}[1] = r t_1^2 t_2^2 + r t_1 t_2 + 1$ which
  shows that $\lim\limits_{r \to \infty} \Phi_{r}[1] /r$ does not exist.

  The zonotope $Z$ associated to $A$ is $\conv \bigl\{ (0,0), (2,0), (0,2),
  (3,1), (1,3), (3,3) \bigr\}$.  Its interior lattice points are $(1,1)$,
  $(2,1)$, $(1,2)$, $(2,2)$ and the associated polytopes are the line segments:
  \begin{xalignat*}{2}
    P(1,1) &= \conv \bigl\{ (0,1,0), (\tfrac{1}{2},0, \tfrac{1}{2}) \bigr\} \, ,
    & P(2,1) &= \conv \bigl\{ (\tfrac{1}{2},1,0), (1,0,\tfrac{1}{2}) \bigr\} \,
    , \\
    P(1,2) &= \conv \bigl\{ (0,1,\tfrac{1}{2}), (\tfrac{1}{2},0, 1) \bigr\} \, ,
    & P(2,2) &= \conv \bigl\{ (1,0,1), (\tfrac{1}{2},0, 1) \bigr\} \, .
  \end{xalignat*}
  Since $\alpha^{-1}(\bm{0}) \cap \ZZ^3 = \{ (i,-2i,i) : i \in \ZZ \}$ and the
  coefficient of $\bm{t}^{\bm{u}}$ in $K_A(\bm{t})$ is the normalized volume of
  $P(\bm{u})$, we have $K_A(t_1,t_2) = \tfrac{1}{2} t_1^2 t_2^2 + \tfrac{1}{2}
  t_1^2 t_2 + \tfrac{1}{2} t_1 t_2^2 + \tfrac{1}{2} t_1 t_2$.  The coefficients
  sum to $2$.  \qed
\end{example}

\begin{example}
  \renewcommand{\qedsymbol}{$\diamond$}

  Let $A = \left[
    \begin{smallmatrix}
      1 & 1 & 0 \\
      0 & 0 & 1 
    \end{smallmatrix} 
  \right]$, so that $d = 2$, $n = 3$ and $m = 1$.  It follows that
  \begin{xalignat*}{2}
    \limsup_{r \to \infty} \frac{\Phi_{r}[1]}{r} &= \lim_{r \to \infty}
    \frac{(r-1)t_1 + 1}{r} = t_1 \, , & \limsup_{r \to \infty}
    \frac{\Phi_{r}[t_1]}{r} &= \lim_{r \to \infty}
    \frac{(r-1)t_1}{r} = t_1 \, , \\
    \limsup_{r \to \infty} \frac{\Phi_{r}[t_2]}{r} &= \lim_{r \to \infty}
    \frac{(r-1)t_1 t_2 + t_2}{r} = t_1 t_2 \, , & \limsup_{r \to \infty}
    \frac{\Phi_{r}[t_1 t_2]}{r} &= \lim_{r \to \infty} \frac{r t_1 t_2}{r} = t_1
    t_2 \, .
  \end{xalignat*}
  Although each of the limits exists, they do not simply depend up to a scalar
  on the matrix $A$.  The zonotope $Z = \conv \bigl\{ (0,0), (2,0), (0,1), (2,1)
  \bigr\}$ has no interior lattice points, but the polytopes $P(1,0) = \conv
  \bigl\{ (1,0,0), (0,1,0) \bigr\}$ and $P(1,1) = \conv \bigl\{ (1,0,1), (0,1,1)
  \bigr\}$ have dimension $n-d = 1$.  In particular, the map $\alpha$ is
  degenerate.  \qed
\end{example}

Further examples, open problems, and other connections are discussed in
\S\ref{sec:three}.  The proof of Theorem~\ref{thm:main} is given in
\S\ref{sec:two}.

\subsubsection*{Acknowledgements}

We thank Matthias Beck, Daniel Erman, and Alan Stapledon for stimulating our
interest in asymptotic Hilbert series and providing feedback on a preliminary
version of this document.  We also thank Mike Roth for his valuable comments and
insights.  The computer software \emph{Macaulay2}~\cite{M2} was useful for
generating examples.  Both authors were partially supported by NSERC.

\section{The Proof of the Main Theorem}
\label{sec:two}

\noindent
We divide the proof of Theorem~\ref{thm:main} into three steps:
Step~\hyperlink{step1}{1} establishes that the limit exists and provides a
polyhedral interpretation for $K_A(\bm{t})$, Step~\hyperlink{step2}{2} uses the
polyhedral interpretation to prove the log-concavity of the coefficients, and
Step~\hyperlink{step3}{3} gives a geometric explanation for the sum of the
coefficients.  For brevity, we write $\prod_j := \prod_{1 \leq j \leq n}$ and
$\sum_j := \sum_{1 \leq j \leq n}$.

\subsection*{Step~1}
\label{step1}
\hypertarget{step1}{}

\renewcommand{\qedsymbol}{$\rhd$}

To begin, we describe the matrix associated to the linear operator $\Phi_r$.
Despite being defined via multivariate formal power series, $\Phi_r$ may be
understood in terms of a linear operator on Laurent polynomials.  Specifically,
let $\Psi_r \in \operatorname{End} \bigl( \ZZ[\bm{t}^{\pm 1}] \bigr)$ be the
linear operator that discards the terms with exponent vectors that are not
componentwise divisible by $r$ and divides each of the remaining exponent
vectors by $r$.  Consider a Laurent polynomial $F(\bm{t}) \in \ZZ[\bm{t}^{\pm
  1}]$ such that $\sum_{\bm{w}} c_{\bm{w}} \, \bm{t}^{\bm{w}} = F(\bm{t}) /
\bigl( \prod_j (1-\bm{t}^{\bm{a}_j}) \bigr)$.  The linear operator $\Psi_r$
lifts to an endomorphism on multivariate formal power series.  Applying $\Psi_r$
to this rational function yields
\begin{align*}
  \frac{\Phi_r [ F(\bm{t}) ]}{\prod_j (1 - \bm{t}^{\bm{a_j}})} =
  \sum_{\bm{w}} c_{r \bm{w}} \, \bm{t}^{\bm{w}} &= \Psi_r \left[
    \frac{F(\bm{t})}{\prod_{j} (1-\bm{t}^{\bm{a}_j})} \right] 
   = \Psi_r \left(
    \frac{F(\bm{t}) \, \prod_{j} (1 + \bm{t}^{\bm{a}_j} + \bm{t}^{2 \bm{a}_j} +
      \dotsb + \bm{t}^{(r-1) \bm{a}_j}) }{\prod_{j} (1-\bm{t}^{r \bm{a}_j})}
  \right) \\
  &= \frac{\Psi_r [ F(\bm{t}) \, \prod_{j} (1 + \bm{t}^{\bm{a}_j} +
    \bm{t}^{2 \bm{a}_j} + \dotsb + \bm{t}^{(r-1) \bm{a}_j}) ]}{\prod_{j}
    (1-\bm{t}^{\bm{a}_j})} \, .
\end{align*}
Hence, we have $\Phi_r [ F(\bm{t}) ] = \Psi_r [ F(\bm{t}) \, \prod\nolimits_{j}
(1 + \bm{t}^{\bm{a}_j} + \bm{t}^{2 \bm{a}_j} + \dotsb + \bm{t}^{(r-1) \bm{a}_j})
]$ which is a multivariate version of Lemma~3.2 in \cite{BeckStapledon}.  To
express $\Phi_r$ as a matrix with respect to the monomial basis, set
\[
C_r(\bm{u},\bm{v}) := \bigl| \{ \bm{x} = (x_1, \dotsc, x_n) \in \ZZ^n \cap
[0,r-1]^n : \textstyle\sum\nolimits_{j} x_j \, \bm{a}_j = r \, \bm{u} - \bm{v}
\} \bigr|.
\]  
Since $F(\bm{t}) := \sum_{\bm{v} \in \ZZ A} f_{\bm{v}} \, \bm{t}^{\bm{v}}$, we
have $\Phi_{r} [ F(\bm{t}) ] = \textstyle\sum_{\bm{u}} \bigl(
\textstyle\sum_{\bm{v} \in \ZZ A} C_{r}(\bm{u},\bm{v}) \, f_{\bm{v}} \bigr)
\bm{t}^{\bm{u}}$.  As $v \in \ZZ A$, we also have $C_r(\bm{u},\bm{v}) = 0$ for
all $\bm{u} \not\in \ZZ A$.  However, the sublattice $\ZZ A \subseteq \ZZ^d$ has
index $m$ so $m \bm{u} \in \ZZ A$.  Hence, it is enough to consider $\Phi_{m r}
[ F(\bm{t}) ] = \textstyle\sum_{\bm{u}} \bigl( \textstyle\sum_{\bm{v} \in \ZZ A}
C_{m r}(\bm{u},\bm{v}) \, f_{\bm{v}} \bigr) \bm{t}^{\bm{u}}$.

We next relate the integer coefficients $C_{m r}(\bm{u},\bm{v})$ to the
normalized volume of the rational polytope $P(\bm{u}) := \{ \bm{x} \in [0,1]^n :
\textstyle\sum_j x_j \, \bm{a}_j = \bm{u} \} = \alpha^{-1}( \bm{u}) \cap
[0,1]^{n} \subseteq \RR^n$.  For any $\bm{u} \in \ZZ^d$ and any $\bm{v} \in \ZZ
A$, there exists $\bm{y}$ and $\bm{z} \in \ZZ^n$ such that $m \bm{u} =
\alpha(\bm{y})$ and $\bm{v} = \alpha(\bm{z})$.  It follows that 
\[
C_{m r}(\bm{u},\bm{v}) = \bigl| \bigl( (mr-1) P(\bm{u}) + \tfrac{1}{m} \bm{y} -
\bm{z} \bigr) \cap \ZZ^n \bigr| = \bigl| \bigl( (mr-1) P(\bm{u}) + \tfrac{1}{m}
\bm{y} \bigr) \cap \ZZ^n \bigr| \, .
\]
The enumerative theory for systems of linear diophantine equations establishes
that the function
$r \mapsto \bigl| \bigl( (mr-1) P(\bm{u}) +
\tfrac{1}{m} \bm{y} \bigr) \cap \ZZ^n \bigr| = \bigl| \bigl( (r-1) \bigl( m
P(\bm{u}) \big) + (m-1)P(\bm{u}) + \tfrac{1}{m} \bm{y} \bigr) \cap \ZZ^n \bigr|$
is a quasi-polynomial; see Theorem~6.50 in \cite{BrunsGubeladze} or
Theorem~4.6.11 in \cite{Stanley}.  Moreover, the leading coefficient of this
quasi-polynomial is the relative volume of $m P(\bm{u})$ with respect to the
lattice $\alpha^{-1}(rmu) \cap \ZZ^n$; see Theorem~6.55 in
\cite{BrunsGubeladze}.  Since $m \bm{u} \in \ZZ A$ and $(mr-1) P(\bm{u}) +
\tfrac{1}{m} \bm{y} \subset \alpha^{-1}(rmu)$, the normalized volume
$\vol_{n-d}(P(\bm{u}))$ equals $ (n-d)! / m^{n-d}$ times coefficient of the
degree $n-d$ term in this quasi-polynomial, so
\begin{align*}
  \lim_{r \to \infty} \frac{C_{m r} (\bm{u},\bm{v})}{(m r)^{n-d}} &= \lim_{r \to
    \infty} \frac{\bigl| \bigl( (mr-1) P(\bm{u}) + \tfrac{1}{m} \bm{y} \bigr)
    \cap \ZZ^n \bigr|}{(mr)^{n-d}} = \frac{\vol_{n-d} \bigl( P(\bm{u})
    \bigr)}{(n-d)!} \, .
\end{align*}
By hypothesis, $\alpha$ is non-degenerate, so $P(\bm{u})$ has dimension $n-d$ if
and only if $\bm{u}$ lies in the interior of $Z$.  Therefore, we have
\begin{align*}
  \lim_{r \to \infty} \frac{\Phi_{m r} [ F(\bm{t}) ]}{(mr)^{n-d}} &=
  \sum_{\bm{u}} \Biggl( \sum_{\bm{v} \in \ZZ A} \biggl( \lim\limits_{r \to
    \infty} \frac{C_{m r} (\bm{u}, \bm{v})}{(m r)^{n-d}} \biggr) f_{\bm{v}}
  \Biggr) \bm{t}^{\bm{u}} = \frac{F(\bm{1})}{(n-d)!} \Biggl( \sum_{\bm{u} \in
    \interior(Z) \cap \ZZ^d} \vol_{n-d} \bigl( P(\bm{u}) \bigr) \,
  \bm{t}^{\bm{u}} \Biggr) \, .
\end{align*}
It follows immediately from this polyhedral description that $K_A(\bm{t})$
inherits the symmetries of the zonotope $Z$. \qed

\subsection*{Step~2}
\hypertarget{step2}{}

The second step takes advantage a well-known theorem from convex geometric
analysis.  To isolate the applicable result, recall that $g \colon \RR^d \to
\RR$ is \define{log-concave} if $g(\bm{u}) > 0$ and $\log(g)$ is concave.
Equivalently, for $s \in [0,1]$ and $\bm{u}, \bm{v} \in \ZZ^d$, we have the
inequality $g \bigl( s \bm{u} + (1-s) \bm{v} \bigr) \geq g(\bm{u})^s
g(\bm{v})^{1-s}$.

\begin{proposition}
  \label{p:logconcave}
  Let $X \subset \RR^n$ be a closed convex set and let $\pi \colon \RR^n \to
  \RR^d$ be a surjective linear map.  If the function $g \colon \RR^d \to \RR$
  assigns to $\bm{u} \in \RR^{d}$ the volume of the fibre $\pi^{-1}(\bm{u}) \cap
  X$, then $g$ is log-concave.
\end{proposition}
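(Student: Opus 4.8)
The plan is to derive the log-concavity of the fibre-volume function directly from the Prékopa–Leindler inequality (the functional Brunn–Minkowski inequality). First I would reduce to the case where the fibres are nonempty on a convex set: the domain $D := \pi(X) \subseteq \RR^d$ is convex because $\pi$ is linear and $X$ is convex, so $g$ vanishes outside a convex set, and it suffices to prove the inequality $g\bigl(s\bm u+(1-s)\bm v\bigr) \geq g(\bm u)^s g(\bm v)^{1-s}$ for $\bm u,\bm v\in D$ and $s\in[0,1]$; when one of $g(\bm u),g(\bm v)$ is zero the inequality is trivial, and we may assume both fibres have positive $(n-d)$-dimensional volume.

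The key step is to set up the three functions needed for Prékopa–Leindler on the $(n-d)$-dimensional fibre spaces. Fix a linear splitting $\RR^n \cong \ker(\pi)\oplus W$ with $W$ a complement, and identify each fibre $\pi^{-1}(\bm u)\cap X$ with its image under translation into $\ker(\pi)\cong\RR^{n-d}$. Define $\varphi,\psi,\chi\colon\ker(\pi)\to\RR_{\geq 0}$ to be the indicator functions of the translated fibres over $\bm u$, $\bm v$, and $\bm w := s\bm u+(1-s)\bm v$ respectively. The convexity of $X$ gives the pointwise hypothesis of Prékopa–Leindler: if $\bm x$ lies in the fibre over $\bm u$ and $\bm y$ in the fibre over $\bm v$, then $s\bm x+(1-s)\bm y$ (suitably translated) lies in the fibre over $\bm w$, because $X$ is convex and $\pi$ is linear; hence $\chi\bigl(s\bm x+(1-s)\bm y\bigr)\geq\varphi(\bm x)^s\psi(\bm y)^{1-s}$ holds identically (both sides are $0$ or $1$, and the right side is $1$ only when both indicators are $1$). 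One must be slightly careful that translating the fibres into $\ker(\pi)$ is compatible with taking convex combinations — choosing the translation vectors to themselves depend affinely on the base point handles this, and the volumes are unchanged since translation is measure-preserving.

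Applying the Prékopa–Leindler inequality to $\varphi,\psi,\chi$ then yields
\[
\int_{\ker(\pi)}\chi \;\geq\; \left(\int_{\ker(\pi)}\varphi\right)^{\!s}\left(\int_{\ker(\pi)}\psi\right)^{\!1-s},
\]
and since these integrals are exactly $g(\bm w)$, $g(\bm u)$, $g(\bm v)$, this is the desired log-concavity. The only genuine subtlety — and the step I expect to require the most care — is the measure-theoretic bookkeeping: one must verify that the $(n-d)$-dimensional Lebesgue measure used on $\ker(\pi)$ is a fixed reference measure independent of the base point, so that the three integrals really compute the same function $g$ evaluated at three points, and that measurability of the fibre indicators is not an issue (it is not, since $X$ is closed and convex, so each fibre is closed and convex, hence measurable). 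Once this normalization is pinned down, the argument is immediate from Prékopa–Leindler.
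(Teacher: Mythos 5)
Your proposal is correct and follows essentially the same route as the paper: parametrize the fibres inside $\ker(\pi)$ via an affine (linear-section) identification, use convexity of $X$ to get the pointwise inequality for the indicator functions, and conclude with Pr\'ekopa--Leindler. The normalization issue you flag is handled in the paper by a single constant $k$ relating Lebesgue measure on $\RR^{n-d}$ to fibre volume, which cancels in the inequality exactly as you anticipate.
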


\begin{proof}
  \renewcommand{\qedsymbol}{$\Box$}

  Choosing a basis for $\ker(\pi)$ defines an injective linear map $\varphi
  \colon \RR^{n-d} \to \RR^n$ such that $\pi \circ \varphi$ is the zero map.  We
  may also choose an injective linear map $\eta \colon \RR^d \to \RR^n$ such
  that $\pi \circ \eta$ is the identity map, because $\pi$ is surjective.  For
  any $\bm{u} \in \RR^d$, we set
  \[
  X(\bm{u}) := \{ \bm{p} \in \RR^{n-d} : \varphi(\bm{p}) + \eta(\bm{u}) \in X \}
  \iso \pi^{-1}(\bm{u}) \cap X \, ,
  \]  
  so that $g(\bm{u}) = \vol\bigl( X(\bm{u}) \bigr)$.  Since $X$ is convex, we
  have $s \, X(\bm{u}) + (1-s) \, X(\bm{v}) \subseteq X\bigl( s \bm{u} + (1-s)
  \bm{v} \bigr)$ for all $s \in [0,1]$ and $\bm{u}, \bm{v} \in \RR^d$.  If $1_Y
  \colon \RR^{n-d} \to \RR$ denotes the indicator function for $Y \subseteq
  \RR^{n-d}$, i.e.{} $1_Y(\bm{y}) = \left\{ 
    \begin{smallmatrix}
      1 & \text{if $\bm{y} \in Y$ \,\!} \\
      0 & \text{if $\bm{y} \not\in Y$,}
    \end{smallmatrix} 
  \right.$ then we obtain $1_{X( s \bm{u} + (1-s) \bm{v} )} \bigl( s \bm{p} +
  (1-s) \bm{q} \bigr) \geq \bigl( 1_{X(\bm{u})}(\bm{p}) \bigr)^s \bigl(
  1_{X(\bm{v})}(\bm{q}) \bigr)^{1-s}$ for all $\bm{p}, \bm{q} \in \RR^{n-d}$.
  There exists a positive $k \in \RR$ such that $g(\bm{u}) = k \int_{\RR^{n-d}}
  1_{X(\bm{u})}(\bm{p}) d\bm{p}$, so the Pr\'ekopa-Leindler inequality
  (Theorem~7.1 in \cite{Gardner} or Theorem~6.4 in \cite{Villani}) yields
  \begin{align*}
    g \bigl( s \bm{u} + (1-s) \bm{v} \bigr) &= k \int_{\RR^{n-d}} 1_{X(s \bm{u}
      + (1-s) \bm{v})}(\bm{p}) \; d \bm{p} \\ &\geq k \left( \int_{\RR^{n-d}}
      1_{X(\bm{u})} (\bm{p}) \; d \bm{p} \right)^{s} \left( \int_{\RR^{n-d}}
      1_{X(\bm{v})} (\bm{p}) \; d \bm{p} \right)^{1-s} = \bigl( g(\bm{u})
    \bigr)^{s} \bigl( g(\bm{v}) \bigr)^{1-s} \, . \qedhere
  \end{align*}
\end{proof}

To establish that the coefficient function of $K_A(\bm{t})$ is log-concave, we
simply apply Proposition~\ref{p:logconcave} when $X = [0,1]^n$ and $\pi =
\alpha$, because Step~\hyperlink{step1}{1} shows that the coefficients of
$K_A(\bm{t})$ equal the normalized volume of the fibres of the map $\alpha
\colon [0,1]^n \to Z$.

We finish this step by showing that the coefficient function of $K_A(\bm{t})$ is
quasi-concave.  A function $g \colon \RR^d \to \RR$ is \define{quasi-concave} if,
for all $e \in \RR$, its superlevel set $\{ \bm{u} \in \RR^d : g(\bm{u}) \geq e
\}$ is convex.  Equivalently, for $s \in [0,1]$ and $\bm{u}, \bm{v} \in
\RR^{d}$, we have $g \bigl( s \bm{u} + (1-s) \bm{v} \bigr) \geq \min \bigl(
g(\bm{u}), g(\bm{v}) \bigr)$.  In particular, every positive log-concave
function is quasi-concave. \qed

\subsection*{Step~3}
\hypertarget{step3}{}
\renewcommand{\qedsymbol}{$\Box$}

Lastly, we reinterpret the coefficients of $K_A(\bm{t})$ as the normalized
volumes of certain regions in the hypercube $[0,m]^{n-d}$.  This perspective is
inspired by the well-known interpretation for the Eulerian numbers as the
normalized volume of ``slabs'' in the hypercube; see \cite{Stanley3}.  In fact,
when $A = [ 1 \, \dotsb \,\, 1 ]$, we recover this interpretation.

Without loss of generality, we may assume that $A$ has a specified block
structure.  By reordering the columns and making a unimodular change of basis on
$\ZZ^d$, it suffices to consider the case in which $A = [ \, H \, | \, B' \,]$
where $H$ is a $(d \times d)$-matrix in Hermite normal form, $\det(H) = m$, and
$B' = [ \bm{b}_1 \, \dotsb \,\, \bm{b}_{d} ]^{\textsf{t}}$ is a $\big( d \times
(n-d) \bigr)$-matrix; see \cite{Schrijver}*{\S4.1}.  Let $B$ be the integer
block $(n \times d)$-matrix $\bigl[ \renewcommand{\arraystretch}{0.5}
\renewcommand{\arraycolsep}{0pt}
\begin{array}{c}  
  \scriptstyle{m H^{-1} B'} \\ \scriptstyle{-m I_{n-d}}
\end{array} 
\bigr]$, so that $AB = 0$.  If $m = 1$ then $H = I_d$ and $B$ is the Gale dual
of $A$; see \cite{Ziegler}*{\S6.4}.

For $\bm{u} \in \ZZ^d$, consider regions
\[
R(\bm{u}) := \{ \bm{p} \in [0,m]^{n-d} : \text{$m (u_i-1) \leq \bm{b}_i \cdot
  \bm{p} \leq m u_i$ for all $1 \leq i \leq d$} \} \subset \RR^{n-d} \, .
\]
By definition, each region $R(\bm{u})$ is a rational polytope and, combined
together, these regions partition the hypercube $[0, m]^{n-d}$.  Hence, the
union $\bigcup_{\bm{u}} R(\bm{u})$ has normalized volume equal to $m^{n-d} (n-d)!$.

To complete the proof, it is enough to show that the coefficients of
$K_A(\bm{t})$ also correspond to the normalized volumes of the regions
$R(\bm{u})$.  If $J$ is the integer $(n \times d)$-matrix $\bigl[
\renewcommand{\arraystretch}{0.5} \renewcommand{\arraycolsep}{0pt}
\begin{array}{c}  
  \scriptstyle{m H^{-1}} \\ \scriptstyle{0}
\end{array} 
\bigr]$, then we obtain $AJ = mI_d$.  Moreover, the inequalities $m(u_i-1) \leq
\bm{b}_i \cdot \bm{p} \leq mu_i$ hold if and only if the inequalities $1 \geq
u_i - \tfrac{1}{m} \bm{b}_i \cdot \bm{p} \geq 0$ hold.  It follows that the
affine map $\bm{p} \mapsto \tfrac{1}{m} (J \bm{u} - B \bm{p})$ sends the region
$R(\bm{u})$ into the rational polytope $P(\bm{u}) = \{ \bm{x} \in [0,1]^n :
\sum_j x_j \, \bm{a}_j = \bm{u} \} = \alpha^{-1}(\bm{u}) \cap [0,1]^n$.  The
inverse map is associated to the $\bigl( (n-d) \times n)$-matrix $L := [ \, 0 \,
| \, I_{n-d} \, ]$ which satisfies $LJ = 0$ and $LB = - m I_{n-d}$.  Since
Step~\hyperlink{step1}{1} establishes that the coefficients of $K_A(\bm{t})$
equal the normalized volumes of the polytopes $P(\bm{u})$, we conclude that
$K_A(\bm{1}) = m^{n-d} (n-d)!$ as required. \qed

\begin{remark}
  \label{rm:unimodular}

  Suppose that $A$ is a totally unimodular matrix which means $m = 1$.  If
  $\bm{u} \in \ZZ^d$, then $P(\bm{u})$ is a lattice polytope (see Theorem~19.1
  in \cite{Schrijver}) and $C_r(\bm{u},\bm{v}) = \bigl| \bigl( (r-1)P(\bm{u})
  \bigr) \cap \ZZ^n \bigr|$.  Thus, the normalized volume $\vol_{n-d}\bigl(
  P(\bm{u}) \bigr)$ is a nonnegative integer which implies that $K_A(\bm{t}) \in
  \ZZ[\bm{t}^{\pm 1}]$ and $K_A(\bm{1}) = (n-d)!$.  Finding a combinatorial
  description for these coefficients remains an open problem.  One tantalizing
  possibility is that the coefficients naturally enumerate some partition of the
  symmetric group on $n-d$ letters.
\end{remark}

\section{Other Connections}
\label{sec:three}

\noindent
This section uses two additional examples to highlight some potential
applications for Theorem~\ref{thm:main}.  To help orient future research, we
also state a few open problems.

\subsection*{Asymptotic Equalities}

When $F(\bm{1}) \neq 0$, Theorem~\ref{thm:main} shows that some of the
coefficients of $\Phi_r [ F(\bm{t}) ]$, regarded as functions of $r$, are
asymptotically equal to an explicit multiple of $r^{n-d}$.  In many situations,
Theorem~\ref{thm:main} yields an asymptotic equality even though $F(\bm{1}) =
0$.  Roughly speaking, the Taylor expansion of $F(\bm{t})$ about $\bm{t} =
\bm{1}$ allows one to cancel factors from the denominator and, thereby, apply
Theorem~\ref{thm:main} to a submatrix of $A$.  We demonstrate this approach in
the following prototypical example.

\begin{example}
  \renewcommand{\qedsymbol}{$\diamond$}

  Let $S = \CC[x_1, \dotsc, x_6]$ be the polynomial ring with the positive
  $\ZZ^2$-grading induced by $A = \left[
    \begin{smallmatrix}
      2 & 1 & 2 & 0 & 1 & 2 \\
      0 & 1 & 1 & 2 & 2 & 2
    \end{smallmatrix}
  \right]$.  Observe that $m = 1$.  Fix the finitely generated $\ZZ^2$-graded
  $S$-module $M = S/I$ where $I = \langle x_5^2 - x_4 x_6, x_3 x_5 - x_2 x_6,
  x_3 x_4 - x_2 x_5, x_3^2 - x_1 x_6, x_2 x_3 - x_1 x_5, x_2^2 - x_1 x_6
  \rangle$.  Under the standard grading ({\footnotesize i.e.{} when $A = [ \, 1 \,
    \dotsb \, 1 \, ]$}), the module $M$ would be the homogeneous coordinate ring
  of the Veronese surface in $\PP^5$.  As a rational function, the Hilbert
  series of $M$ is
  \begin{align*}
    \frac{F(\bm{t})}{\prod_{j}(1 - \bm{t}^{\bm{a}_{j}})} &=
    \frac{(1-t_1 t_2)(1-t_1^2 t_2)(1-t_1 t_2^2)(1 + t_1 t_2 + t_1^2 t_2 + t_1
      t^2)}{(1-t_1^2)(1-t_1 t_2)(1-t_1^2 t_2)(1-t_2^2)(1-t_1 t_2^2)(1-t_1^2
      t_2^2)} \, .
  \end{align*}
  Since $\ell := \operatorname{codim}(M) = 3 > 0$, it follows that $F(\bm{1}) =
  0$.

  In this context, an appropriate expansion of $F(\bm{t})$ comes from a choice
  of initial module.  Each monomial initial module of $M$ is homogeneous with
  respect to the $\ZZ^n$-grading arising from the identity matrix $I_n$; see
  \cite{MillerSturmfels}*{\S2.2}.  Since $M$ and any initial module have the
  same Hilbert series, the Taylor expansion for the numerator of the
  $\ZZ^n$-graded Hilbert series produces an expression for $F(\bm{t})$ as a
  polynomial in the variables $(1-\bm{t}^{\bm{a}_1}), \dotsc,
  (1-\bm{t}^{\bm{a}_n})$; see Exercise~8.15 in \cite{MillerSturmfels}.
  Moreover, the lowest terms in each such expression have degree $\ell$, are
  square-free, and have nonnegative coefficients; see Exercise~8.8 in
  \cite{MillerSturmfels}.  For instance, the monomial initial ideals $\langle
  x_2, x_3, x_5 \rangle^2$ and $\langle x_1, x_2, x_5^2 \rangle \cap \langle
  x_2^2, x_5, x_6 \rangle$ of $I$ ({\footnotesize chosen from among the $29$
    possibilities}) yield the following expansions:
  \begin{align*}
    F(\bm{t}) &= 4 (1 \! - \! \bm{t}^{\bm{a}_2}) (1 \! - \! \bm{t}^{\bm{a}_3})
    (1 \! - \!  \bm{t}^{\bm{a}_5}) - (1 \! - \!  \bm{t}^{\bm{a}_2})^2 (1 \! - \!
    \bm{t}^{\bm{a}_3}) (1 \! - \!  \bm{t}^{\bm{a}_5}) - (1 \! - \!
    \bm{t}^{\bm{a}_2}) (1 \! - \!  \bm{t}^{\bm{a}_3})^2 (1 \! - \!
    \bm{t}^{\bm{a}_5}) \\&\relphantom{==} - (1 \! - \!  \bm{t}^{\bm{a}_2}) (1 \!
    - \!
    \bm{t}^{\bm{a}_3}) (1 \! - \! \bm{t}^{\bm{a}_5})^2 \\
    &= 2 (1 \! - \! \bm{t}^{\bm{a}_1}) (1 \! - \! \bm{t}^{\bm{a}_2}) (1 \! - \!
    \bm{t}^{\bm{a}_5}) + 2 (1 \! - \! \bm{t}^{\bm{a}_2}) (1 \! - \!
    \bm{t}^{\bm{a}_5}) (1 \! - \! \bm{t}^{\bm{a}_6}) - (1 \!  - \!
    \bm{t}^{\bm{a}_1}) (1 \! - \! \bm{t}^{\bm{a}_2}) (1 \! - \!
    \bm{t}^{\bm{a}_5})^2 \\&\relphantom{==} - (1 \! - \!  \bm{t}^{\bm{a}_1}) (1
    \! - \!  \bm{t}^{\bm{a}_2}) (1 \! - \!  \bm{t}^{\bm{a}_5}) (1 \! - \!
    \bm{t}^{\bm{a}_6}) - (1 \! - \!  \bm{t}^{\bm{a}_2})^2 (1 \! - \!
    \bm{t}^{\bm{a}_5}) (1 \! - \!  \bm{t}^{\bm{a}_6}) \, .
  \end{align*}
  Observe that the coefficient of the term $(1- \bm{t}^{\bm{a}_{s_1}}) (1-
  \bm{t}^{\bm{a}_{s_2}}) (1- \bm{t}^{\bm{a}_{s_3}})$ equals the multiplicity
  $\mu_{\bm{s}}$ of the minimal prime $\langle x_{s_1}, x_{s_2}, x_{s_3}
  \rangle$ for the initial ideal; cf. Definition~8.43 in \cite{MillerSturmfels}.
  Since we have $\Phi_r [ (1-\bm{t}^{\bm{a}_i}) F(\bm{t}) ] =
  (1-\bm{t}^{\bm{a}_i}) \Phi_r [ F(\bm{t}) ]$, Theorem~\ref{thm:main} gives
  \[
  \lim_{r \to \infty} \, \frac{\Phi_r [ F(\bm{t}) ]}{r^{n-\ell-d}} =
  \frac{1}{(n-\ell-d)!} \sum_{\bm{s}} \left(
    \frac{\mu_{\bm{s}}}{m_{A_{\widehat{\bm{s}}}}^{n-\ell-d}} \,
    K_{A_{\widehat{\bm{s}}}}(\bm{t}) \prod_{1 \leq i \leq \ell}
    (1-\bm{t}^{\bm{a}_{s_i}}) \right) \, ,
  \]
  where $A_{\widehat{\bm{s}}}$ is the $(d \times (n-\ell))$-submatrix of $A$ in
  which the columns indexed by $\bm{s}$ are omitted and
  $m_{A_{\widehat{\bm{s}}}}$ is the greatest common divisor of the maximal
  minors of $A_{\widehat{\bm{s}}}$.  In particular, we have
  \begin{align*}
    \lim_{r \to \infty} & \frac{\Phi_r [ F(\bm{t}) ]}{r} = \tfrac{1}{2} t_1 t_2
    (t_1+1) (t_2+1) (t_1 t_2+1) (1 - t_1 t_2) (1 - t_1^2 t_2) (1 -  t_1 t_2^2)
    \\
    &= \tfrac{4}{4} \Bigl( K_{\left[
          \begin{smallmatrix}
            2 & 0 & 2 \\
            0 & 2 & 2 
          \end{smallmatrix}
        \right]} (\bm{t}) \Bigr) (1 \! - \! \bm{t}^{\bm{a}_2}) (1 \! - \!
      \bm{t}^{\bm{a}_3}) (1 \! - \!
      \bm{t}^{\bm{a}_5}) \\
      &= \tfrac{2}{2} \Bigl( K_{\left[
          \begin{smallmatrix}
            2 & 0 & 2 \\
            1 & 2 & 2 
          \end{smallmatrix}
        \right]}(\bm{t}) \Bigr) (1 \! - \! \bm{t}^{\bm{a}_1}) (1 \! - \!
      \bm{t}^{\bm{a}_2}) (1 \! - \! \bm{t}^{\bm{a}_5}) %\\ &\relphantom{==}
      + \tfrac{2}{2} \Bigl( K_{\left[
          \begin{smallmatrix}
            2 & 2 & 0 \\
            0 & 1 & 2 
          \end{smallmatrix}
        \right]}(\bm{t}) \Bigr) (1 \! - \! \bm{t}^{\bm{a}_2}) (1 \! - \!
      \bm{t}^{\bm{a}_5}) (1 \! - \! \bm{t}^{\bm{a}_6}) \, .
  \end{align*}
  As the expansions for $F(\bm{t})$ vary, these equations lead to nontrivial
  relations among asymptotic $K$-polynomials associated to submatrices of $A$.
  \qed
\end{example}

This approach also suggests a way to understand lower-order asymptotics.
Specifically, if $\limsup\limits_{r \to \infty} \frac{\Phi_r [ F (\bm{t})
  ]}{r^{n-\ell-d}} = \frac{G(\bm{t})}{(n-\ell-d)!}$, then the Laurent polynomial
$(n-\ell-d)! \, \Phi_r [ F (\bm{t}) ] - G(\bm{t})$ vanishes at $\bm{t} = \bm{1}$
and one should analyze $\limsup\limits_{r \to \infty} \frac{(n-\ell-d)! \,
  \Phi_r [ F (\bm{t}) ] - G(\bm{t})}{r^{n-\ell-d-1}}$.  Providing an algebraic
or geometric interpretation for this lower-order asymptotics is an open problem.

\subsection*{Stochastic Matrices}

By scaling the matrix associated to the linear operator $\Phi_r$, we obtain a
stochastic matrix.  Specifically, the matrix $C(r) := r^{\, d-n} \, [ \,
C_{r}(\bm{u},\bm{v}) \, ]$ where $\bm{u}$ and $\bm{v}$ range over $\interior(Z)
\cap \ZZ^d$ is a nonnegative square matrix each of whose columns sum to $1$.
When $A = [ \, 1 \, \dotsb \, 1 \, ]$, \cite{Holte} ({\footnotesize also see
  \cite{DF1}}) establishes that $C(r)$ has the following ``amazing'' properties:
\begin{itemize}
\item for all $r$, the stationary vector ({\footnotesize i.e.{} the eigenvector
    with eigenvalue $1$}) corresponds to the coefficients of $K_A(\bm{t)} /
  (n-d)!$;
\item the matrix $C(r)$ has eigenvalues $r^{-i}$ for $0 \leq i < n-d$ with
  explicit eigenvectors independent of $r$; and
\item we have $C(r_1) C(r_2) = C(r_1 r_2)$. 
\end{itemize}
The next example illustrates how these properties extend to our more general
setting.

\begin{example}
  \renewcommand{\qedsymbol}{$\diamond$}

  Let $A = \left[
    \begin{smallmatrix}
      1 & 1 & 0 & 0 & -1 \\
      0 & 0 & 1 & 1 & 1
    \end{smallmatrix} 
  \right]$, so that $Z = \conv \{ (0,0), (2,0), (-1,1), (2,2), (-1,3), (1,3) \}$
  and $K_A(t_1,t_2) = t_1 t_2^2 + 2 t_1 t_2 + 2 t_2^2 + t_2$.  Fixing $(1,2),
  (1,1), (2,0), (1,0)$ as the ordering for the interior lattices points yields
  the stochastic matrix
  \[
  C(r) := r^{-3} \left[ 
    \begin{matrix}
      \binom{r+2}{3} & \binom{r+1}{3} & \binom{r+1}{3} & \binom{r}{3} \\
      2 \binom{r+1}{3} & 2\binom{r+1}{3} + \binom{r+1}{2} & 2\binom{r}{3} +
      \binom{r}{2} & 2 \binom{r+1}{3} \\
      2 \binom{r+1}{3} & 2\binom{r}{3} + \binom{r}{2} & 2\binom{r+1}{3} +
      \binom{r+1}{2}  & 2 \binom{r+1}{3} \\
      \binom{r}{3} & \binom{r+1}{3} & \binom{r+1}{3} & \binom{r+2}{3}
    \end{matrix}
    \right] \, .
  \]
  The eigenvalues of $C(r)$ are $1, r^{-1}, r^{-1}, r^{-2}$ and the
  corresponding eigenvectors are simply $[ \, 1 \,\, 2 \,\, 2 \,\, 1 \,
  ]^{\textsf{t}}$, $[ \, 1 \,\, 0 \,\, 0 \,\, -1 \, ]^{\textsf{t}}$, $[ \, 0
  \,\, 1 \,\, -1 \,\, 0 \, ]^{\textsf{t}}$, $[ \, 1 \,\, -1 \,\, -1 \,\, 1 \,
  ]^{\textsf{t}}$. \qed
\end{example}

In the standard graded case, \cite{Holte} and \cite{DF1} relate the matrix
$C(r)$ to the process of ``carries'' when adding integers and to shuffling
cards, respectively.  Do our more general stochastic matrices also correspond to
known Markov chains?  Regardless, it would be interesting to bound the rates of
convergence for the associated Markov chains and thereby extend the results in
\cite{DF2}*{\S3}.

\raggedright
%%%%%%%%%%%%%%%%%%%%%%%%%%%%%%%%%%%%%%%%%%%%%%%%%%%%%%%%%%%%%%%%%%%%%%%%%%%%%%%%
\begin{bibdiv}
\begin{biblist}

\bib{BeckStapledon}{article}{
  author={Beck, Matthias},
  author={Stapledon, Alan},
  title={\href{http://dx.doi.org/10.1007/s00209-008-0458-7}%
    {On the log-concavity of Hilbert series of Veronese subrings and Ehrhart
      series}},
  journal={Math. Z.},
  volume={264},
  date={2010},
  number={1},
  pages={195--207},
}

\bib{Branden}{article}{
  label={Bra},
  author={Br{\"a}nd{\'e}n, Petter},
  title={\href{http://dx.doi.org/10.1016/j.aim.2007.05.011}%
    {Polynomials with the half-plane property and matroid theory}},
  journal={Adv. Math.},
  volume={216},
  date={2007},
  number={1},
  pages={302--320},
}

\bib{BHVW}{article}{
  author={Br{\"a}nd{\'e}n, Petter},
  author={Haglund, James},
  author={Visontai, Mirk\'o},
  author={Wagner, David G.},
  title={\href{http://arxiv.org/pdf/1010.2565v2}%
    {Proof of the monotone column permanent conjecture}},
  status={available at \texttt{arXiv:1010.2565v2 [math.CO]}}
}

\bib{Brenti}{article}{
  label={Bre},
  author={Brenti, Francesco},
  title={Log-concave and unimodal sequences in algebra, combinatorics, and
    geometry: an update},
  conference={
    title={Jerusalem combinatorics '93},
  },
  book={
    series={Contemp. Math.},
    volume={178},
    publisher={Amer. Math. Soc.},
    place={Providence, RI},
  },
  date={1994},
  pages={71--89},
}

\bib{BrentiWelker}{article}{
  author={Brenti, Francesco},
  author={Welker, Volkmar},
  title={\href{http://dx.doi.org/10.1016/j.aam.2009.01.001}%
    {The Veronese construction for formal power series and graded algebras}},
  journal={Adv. in Appl. Math.},
  volume={42},
  date={2009},
  number={4},
  pages={545--556},
}

\bib{BrunsGubeladze}{book}{
  author={Bruns, Winfried},
  author={Gubeladze, Joseph},
  title={Polytopes, rings, and $K$-theory},
  series={Springer Monographs in Mathematics},
  publisher={Springer},
  place={Dordrecht},
  date={2009},
  pages={xiv+461},
}

\bib{DF1}{article}{
  author={Diaconis, Persi},
  author={Fulman, Jason},
  title={\href{http://dx.doi.org/10.4169/000298909X474864}%
    {Carries, shuffling, and an amazing matrix}},
  journal={Amer. Math. Monthly},
  volume={116},
  date={2009},
  number={9},
  pages={788--803},
}	

\bib{DF2}{article}{
  author={Diaconis, Persi},
  author={Fulman, Jason},
  title={\href{http://dx.doi.org/10.1016/j.aam.2009.02.002}%
    {Carries, shuffling, and symmetric functions}},
  journal={Adv. in Appl. Math.},
  volume={43},
  date={2009},
  number={2},
  pages={176--196},
}

\bib{EinLazarsfeld}{article}{
  author={Ein, Lawrence},
  author={Lazarsfeld, Robert},
  title={\href{http://arxiv.org/abs/1103.0483}%
    {Asymptotic syzygies of algebraic varieties}},
  status={available at \texttt{arXiv:1103.0483v1 [math.AG]}}
}

\bib{Gardner}{article}{
  label={Gar},
  author={Gardner, Richard J.},
  title={\href{http://dx.doi.org/10.1090/S0273-0979-02-00941-2}%
    {The Brunn-Minkowski inequality}},
  journal={Bull. Amer. Math. Soc. (N.S.)},
  volume={39},
  date={2002},
  number={3},
  pages={355--405},
}

\bib{Holte}{article}{
  label={Hol},
  author={Holte, John M.},
  title={\href{http://dx.doi.org/10.2307/2974981}%
    {Carries, combinatorics, and an amazing matrix}},
  journal={Amer. Math. Monthly},
  volume={104},
  date={1997},
  number={2},
  pages={138--149},
}

\bib{M2}{article}{
  label={M2},
  author={Grayson, Daniel R.},
  author={Stillman, Michael E.},
  title={Macaulay2, a software system for research in algebraic
    geometry},
  status={available at \href{http://www.math.uiuc.edu/Macaulay2/}%
    {\texttt{www.math.uiuc.edu/Macaulay2/}}}
}

\bib{MillerSturmfels}{book}{
  author={Miller, Ezra},
  author={Sturmfels, Bernd},
  title={Combinatorial commutative algebra},
  series={Graduate Texts in Mathematics},
  volume={227},
  publisher={Springer-Verlag},
  place={New York},
  date={2005},
  pages={xiv+417},
}

\bib{Postnikov}{article}{
  label={Pos},
  author={Postnikov, Alexander},
  title={\href{http://dx.doi.org/10.1093/imrn/rnn153}%
    {Permutohedra, associahedra, and beyond}},
  journal={Int. Math. Res. Not. IMRN},
  date={2009},
  number={6},
  pages={1026--1106},
}

\bib{Schrijver}{book}{
  label={Sch},
  author={Schrijver, Alexander},
  title={Theory of linear and integer programming},
  series={Wiley-Interscience Series in Discrete Mathematics},
  publisher={John Wiley \& Sons Ltd.},
  place={Chichester},
  date={1986},
  pages={xii+471},
}

\bib{Stanley3}{article}{
  label={Sta1},
  author={Stanley, Richard P.},
  title={\href{http://www-math.mit.edu/~rstan/pubs/pubfiles/34a.pdf}%
    {Eulerian partitions of a unit cube}},
   conference={
      title={Higher combinatorics (Proc. NATO Advanced Study Inst., Berlin,
      1976)},
   },
   book={
      publisher={Reidel},
      place={Dordrecht},
   },
   date={1977},
   pages={49},
}

\bib{Stanley2}{article}{
  label={Sta2},
  author={Stanley, Richard P.},
  title={\href{http://www-math.mit.edu/~rstan/pubs/pubfiles/72.pdf}%
    {Log-concave and unimodal sequences in algebra, combinatorics, and
      geometry}},
  conference={
    title={Graph theory and its applications: East and West},
    address={Jinan},
    date={1986},
  },
  book={
    series={Ann. New York Acad. Sci.},
    volume={576},
    publisher={New York Acad. Sci.},
    place={New York},
  },
  date={1989},
  pages={500--535},
}

\bib{Stanley}{book}{
  label={Sta3},
  author={Stanley, Richard P.},
  title={Enumerative combinatorics. Vol. 1},
  series={Cambridge Studies in Advanced Mathematics},
  volume={49},
  % note={With a foreword by Gian-Carlo Rota;
  % Corrected reprint of the 1986 original},
  publisher={Cambridge University Press},
  place={Cambridge},
  date={1997},
  pages={xii+325},
}

\bib{Sturmfels}{article}{
  label={Stu},
  author={Sturmfels, Bernd},
  title={\href{http://dx.doi.org/10.1016/0097-3165(95)90067-5}%
    {On vector partition functions}},
  journal={J. Combin. Theory Ser. A},
  volume={72},
  date={1995},
  number={2},
  pages={302--309},
}

\bib{Villani}{book}{
  label={Vil},
  author={Villani, C{\'e}dric},
  title={Topics in optimal transportation},
  series={Graduate Studies in Mathematics},
  volume={58},
  publisher={American Mathematical Society},
  place={Providence, RI},
  date={2003},
  pages={xvi+370},
}

\bib{Ziegler}{book}{
  label={Zie},
  author={Ziegler, G{\"u}nter M.},
  title={Lectures on polytopes},
  series={Graduate Texts in Mathematics},
  volume={152},
  publisher={Springer-Verlag},
  place={New York},
  date={1995},
  pages={x+370},
}
		
\end{biblist}
\end{bibdiv}

\end{document}